\newtheorem{thm}{Theorem}[section]
\newtheorem{lem}[thm]{Lemma}
\newtheorem{cor}[thm]{Corollary}
\newtheorem{conj}[thm]{Conjecture}
\theoremstyle{definition}
\theoremstyle{remark}
\begin{document}
\bibliographystyle{abbrv}

\begin{center}
	{\large\bf CERTAIN FLOOR FUNCTION SUMS OVER POWERS}\\[1.5em]
	{\scshape Joshua Stucky}
\end{center}
\vspace{.5em}

\begin{abstract}
We study the sums
\[
S_f(x) = \sum_{n\leq x} f\pth{\floor{\frac{x}{n}}}
\]
when $f$ is supported on $r$th powers with $r\geq 2$. This restriction allows us to give nontrivial estimates for one of the error terms in the asymptotic expansion of $S_f(x)$. We also state several conjectures related to our results.
\end{abstract}

\section{Introduction and Statement of Results}

Since their recent introduction by Bordell\'{e}s et.\ al.\ \cite{BordellesEtAl2019}, there has been a fair amount of research into the sums
\[
S_f(x) = \sum_{n\leq x} f\pth{\floor{\frac{x}{n}}},
\]
where $f$ is an arithmetic function and $\floor{\cdot}$ denotes the integer part function. Under mild assumptions on the growth of $f$, one can establish decent asymptotic estimates for such sums. For instance, Wu \cite{Wu2019-1} and Zhai \cite{Zhai2019} showed independently that if $f(n) \ll n^\alpha$ for some $0\leq \alpha < 1$, then
\begin{equation}\label{eq:WuZhai}
S_f(x) = C_f x + O\pth{x^{\frac{1+\alpha}{2}}},
\end{equation}
where here and throughout we denote
\begin{equation}\label{eq:CfDef}
C_f = \sum_{n=1}^\infty \frac{f(n)}{n(n+1)}.
\end{equation}
This estimate has been improved by a number of authors for functions satisfying various additional properties. For instance, if one has a mean square estimate
\[
\sum_{n\leq x} \abs{f(n)}^2 \ll x^\alpha
\]
for some $\alpha\in[1,2)$, then Wu and Zhao \cite{WuZhao2022} have shown that
\[
C_f x + O\pth{x^\frac{2+3\alpha}{8}}.
\]

In a different direction, one can use exponential sum estimates to improve \eqref{eq:WuZhai} when $f$ has a convolution structure, say $f=1*g$, or an analogous structure such as that given by Vaughan's identity in the case $f = \Lambda$ (see, e.g. \cite{LiuWuYang2021General},\cite{LiuWuYang2021PNT},\cite{Stucky2022},\cite{Zhang2021}). 

In this article, we introduce another property that $f$ may possess that allows us to improve \eqref{eq:WuZhai}. Specifically, we consider functions of the form
\begin{equation}\label{eq:fOnPowers}
f(n) = \begin{cases}
h(d) & \text{if $n=d^r$},\\
0 & \text{otherwise}.
\end{cases}
\end{equation}
The reason we consider such functions is that when $r\geq 2$, we can give a nontrivial estimate for one of the error terms arising in the asymptotic formula for $S_f$, namely
\begin{equation}\label{eq:ColefDef}
S_f^\dagger(x;A) = \sum_{n < A} f\pth{\floor{\frac{x}{n}}}.
\end{equation}
In all previous works, this quantity is estimated trivially using a point-wise upper bound for $f$. For functions supported on $r$th powers, we are able to improve this estimate by understanding the spacing between the integers $n$ for which there exists an integer $d$ satisfying
\[
\floor{\frac{x}{n}} = d^r.
\]
We do so by following the method of Roth and Halberstam (see, e.g., \cite{FGT2015} for a nice survey of these ideas). As an example of our results, if $h(n) \ll n^\alpha$ for some $\alpha \in [0,\frac{1}{2r-1})$, then for $A \ll x^{\frac{1}{r+1}}$,
\begin{equation}\label{eq:ExampleEstimate}
S_f^\dagger\pth{x;A}  \ll \fracp{x}{A}^{\frac{\alpha}{r}} A^{\frac{1}{r(2r-1)}} x^{\frac{r-1}{r(2r-1)}},
\end{equation}
which is better than the trivial estimate
\begin{equation}\label{eq:SfDaggerTrivial}
S_f^\dagger\pth{x;A}  \ll A \fracp{x}{A}^{\frac{\alpha}{r}}
\end{equation}
so long as $A \geq x^{\frac{1}{2r+1}}$. Using Lemma \ref{lem:Spacing}, we will prove the following generalization of \eqref{eq:WuZhai}.

\begin{thm}\label{thm:GeneralWuZhai}
Let $r\geq 1$ be an integer and let
\begin{equation}\label{eq:etaDef}
\eta = \begin{cases}
	1 & \text{if $r$ is odd}, \\
	0 & \text{if $r$ is even}.
\end{cases}
\end{equation}
If $f$ is given by {\normalfont\eqref{eq:fOnPowers}} with $h$ satisfying $\abs{h(d)} \ll d^\alpha$ for some $\alpha\in[0,\frac{r+2-\eta}{2r-2+2\eta})$, then
\begin{equation}\label{eq:GeneralWuZhai}
S_f(x) = C_f x + O\pth{x^{\frac{2(1+\alpha)}{3r+\eta}}},
\end{equation}
where $C_f$ is given by {\normalfont \eqref{eq:CfDef}}. 
\end{thm}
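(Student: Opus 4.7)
My plan is the standard split-and-swap strategy, with the Roth--Halberstam spacing input doing the new work. Fix a parameter $A \in [1,x]$, to be optimized at the end, and write
\[
S_f(x) = S_f^\dagger(x;A) + \sum_{A \leq n \leq x} f\pth{\floor{x/n}}.
\]
The first piece is handled by Lemma~\ref{lem:Spacing} (the improvement recorded in \eqref{eq:ExampleEstimate} is a representative instance); this is the ingredient that distinguishes the present setting from the usual Wu--Zhai framework. For the second piece I switch the order of summation via the level sets $\floor{x/n} = m$: since $f$ is supported on $r$-th powers, I parametrize $m = d^r$ and use $\#\{n : \floor{x/n} = d^r\} = \floor{x/d^r} - \floor{x/(d^r+1)}$ to obtain
\[
\sum_{A \leq n \leq x} f\pth{\floor{x/n}} = \sum_{d \leq D} h(d)\bigl(\floor{x/d^r} - \floor{x/(d^r+1)}\bigr) + O(\text{boundary})
\]
with $D \asymp (x/A)^{1/r}$.

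Writing $\floor{y} = y - \{y\}$ decomposes the inner sum into a main term and a fractional-part error. Extending the main term's $d$-sum to infinity produces $C_f x$, with tail $\ll x D^{\alpha - 2r + 1}$, which is acceptable under the hypothesis on $\alpha$. The fractional-part error
\[
\sum_{d \leq D} h(d)\bigl(\{x/(d^r+1)\} - \{x/d^r\}\bigr)
\]
is the delicate term. The trivial $L^1$ bound $D^{1+\alpha}$ already suffices for small $r$, but for larger $r$ one squeezes extra savings by a truncated Fourier expansion of $\{y\}$ combined with a $k$-th derivative test (or an exponent-pair bound) applied to the exponential sums $\sum_{d \leq D} h(d)\, e(\ell x/d^r)$. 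I expect the parity parameter $\eta$ to enter precisely at this stage through which derivative test is sharp.

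The final step is to balance $S_f^\dagger(x;A)$ against the fractional-part error by choosing $A$ to equalize them; the resulting exponent should simplify to $2(1+\alpha)/(3r+\eta)$, and the constraint that both inputs remain valid on the same range of $A$ forces the hypothesis $\alpha \in [0,(r+2-\eta)/(2r-2+2\eta))$. The main obstacle I anticipate is the uniformity of the fractional-part estimate: because $h$ is only assumed to satisfy the pointwise bound $\abs{h(d)} \ll d^\alpha$ with no further multiplicative structure, the exponential-sum input has to be fed through partial summation against a power-type envelope, and any logarithmic or small-power losses at that step must be tracked carefully so they do not spoil the final exponent.
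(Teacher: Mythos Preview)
Your decomposition and extraction of the main term $C_f x$ match the paper's argument. The gap is in where you locate the parity parameter $\eta$. You expect it to arise from a $k$-th derivative test on the fractional-part error, and you plan to invoke exponential-sum estimates for large $r$. In fact, for Theorem~\ref{thm:GeneralWuZhai} the fractional-part sum $\sum_{d \leq D} h(d)\bigl(\psi(x/(d^r+1)) - \psi(x/d^r)\bigr)$ is bounded \emph{trivially} by $D^{1+\alpha} = (x/A)^{(1+\alpha)/r}$ for every $r$; no Fourier expansion or derivative test is used at all. The exponent-pair analysis you sketch is the content of the separate Theorem~\ref{thm:ConjProof}, not of this one, so the uniformity concern you raise at the end does not arise here.

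The $\eta$ comes instead from Lemma~\ref{lem:Spacing}. That lemma carries a free integer parameter $l$ with $1 \leq l \leq r$; the estimate \eqref{eq:ExampleEstimate} you cite is only the special case $l=r$. Each choice of $l$ is valid only in the range $A \leq C_r x^{(r+1-l)/(2r+1-l)}$. Balancing the trivial fractional-part bound $(x/A)^{(1+\alpha)/r}$ against the spacing bound yields the optimum $A = x^{l/(l+r)}$ and error $x^{(1+\alpha)/(l+r)}$, but the range constraint on $A$ then forces $l \leq (r+1)/2$. One therefore takes $l = \lfloor (r+1)/2 \rfloor = (r+\eta)/2$, whence $l+r = (3r+\eta)/2$ and the claimed exponent $2(1+\alpha)/(3r+\eta)$; the hypothesis on $\alpha$ is exactly the condition $\alpha < (r+1-l)/(2l-1)$ at this value of $l$. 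If you stick with $l=r$ as in \eqref{eq:ExampleEstimate}, the restriction $A \ll x^{1/(r+1)}$ is too tight and you will not reach the stated exponent for any $r \geq 3$.
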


We note that the trivial estimate \eqref{eq:SfDaggerTrivial} leads to
\[
S_f(x) = C_f x + O\pth{x^{\frac{1+\alpha}{r+1}}},
\]
which is weaker than \eqref{eq:GeneralWuZhai} when $r\geq 3$. The form of Theorem \ref{thm:GeneralWuZhai} is someone unnatural, owing primarily to the restriction on the range of $A$ in Lemma \ref{lem:Spacing}. It seems reasonable that the following simpler and stronger result should hold for a more restricted range of $\alpha$.

\begin{conj}\label{conj:Trivial}
Let $r\geq 2$ be an integer. If $f$ is given by {\normalfont(\ref{eq:fOnPowers})} with $h$ satisfying $\abs{h(d)} \ll d^\alpha$ for some $\alpha\in[0,\frac{1}{2r-1}]$, then
\[
S_f(x) = C_f x + O\pth{x^{\frac{1+\alpha}{2r}}},
\]
where $C_f$ is given by {\normalfont(\ref{eq:CfDef})}. 
\end{conj}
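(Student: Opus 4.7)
The plan is to refine the decomposition underlying Theorem \ref{thm:GeneralWuZhai}. Grouping the $n$ in $S_f(x)$ by the common value $\floor{x/n} = d^r$, one has
\[
S_f(x) = \sum_{d \leq x^{1/r}} h(d)\pth{\floor{\tfrac{x}{d^r}} - \floor{\tfrac{x}{d^r+1}}}.
\]
I would split this sum at $D = x^{1/(2r)}$. For $d \leq D$, writing $\floor{y} = y - \{y\}$ separates a main term $x\sum_{d \leq D} h(d)/(d^r(d^r+1))$ from a fractional-part contribution. Completing the main term to $d = \infty$ yields $C_f x$ with a tail error $O(xD^{1+\alpha-2r}) = O(x^{(1+\alpha)/(2r)})$, while the fractional-part contribution is $\ll D^{1+\alpha} = x^{(1+\alpha)/(2r)}$ by the trivial bound $\abs{\{x/d^r\} - \{x/(d^r+1)\}} \leq 1$. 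For $d > D$, the condition $\floor{x/n} = d^r$ forces $n < x^{1/2}$, so this contribution equals $S_f^\dagger(x; x^{1/2})$ up to $O(1)$ boundary terms.

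The conjecture therefore reduces to the bound
\[
S_f^\dagger(x; x^{1/2}) \ll x^{(1+\alpha)/(2r)},
\]
and this is where I expect all the difficulty to lie. The trivial bound \eqref{eq:SfDaggerTrivial} gives only $O(x^{(r+\alpha)/(2r)})$ at $A = x^{1/2}$, which falls short by a factor of $x^{(r-1)/(2r)}$. The estimate \eqref{eq:ExampleEstimate} from Lemma \ref{lem:Spacing} is restricted to $A \ll x^{1/(r+1)}$; however, a direct substitution of $A = x^{1/2}$ into \eqref{eq:ExampleEstimate} yields exactly $x^{(1+\alpha)/(2r)}$, and the conjectural range $\alpha \leq 1/(2r-1)$ is precisely the range in which \eqref{eq:ExampleEstimate} is the natural output of the Roth--Halberstam spacing method. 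This matching strongly indicates that the conjecture is equivalent to extending Lemma \ref{lem:Spacing} from the range $A \ll x^{1/(r+1)}$ all the way up to $A \leq x^{1/2}$.

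To achieve such an extension, I would attempt to sharpen the Roth--Halberstam analysis that underlies Lemma \ref{lem:Spacing}. The $n < x^{1/2}$ contributing to $S_f^\dagger$ are in bijection with the $d > D$ for which $(x/(d^r+1), x/d^r]$ contains an integer, which is equivalent to a discrepancy problem for the fractional parts $\{(x/n)^{1/r}\}$ on dyadic intervals $n \asymp N$ with $N \leq x^{1/2}$. A sufficiently sharp discrepancy estimate, obtainable by applying van der Corput or Vinogradov-type bounds to the exponential sums $\sum_{n \asymp N} e\pth{k(x/n)^{1/r}}$ for small nonzero integers $k$ and then invoking the Erd\H{o}s--Tur\'{a}n inequality, would suffice. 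The essential obstacle is that the saving required over the trivial estimate grows with $r$, so one needs exponential sum bounds whose strength degrades favorably in $r$ uniformly in $N$ up to $x^{1/2}$; securing this uniformity, rather than the restricted range $N \ll x^{1/(r+1)}$ accessible to the current method, is where the main technical difficulty lies.
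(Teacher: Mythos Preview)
Your reduction is sound: the conjecture does come down to $S_f^\dagger(x;x^{1/2}) \ll x^{(1+\alpha)/(2r)}$, and you are candid that this is the unproved step. The paper does not prove the conjecture in general either; it is stated as a conjecture and is established only for $r=2$, via Theorem~\ref{thm:ConjProof} and Corollary~\ref{cor:ConjProof}.

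Where your plan diverges from the paper is in how to attack that remaining range. You propose to push the Roth--Halberstam spacing argument (Lemma~\ref{lem:Spacing}) from $A \ll x^{1/(r+1)}$ all the way up to $A = x^{1/2}$, via discrepancy of the fractional parts $\{(x/n)^{1/r}\}$ and the associated exponential sums $\sum_n e\bigl(k(x/n)^{1/r}\bigr)$. The paper does the opposite: it keeps Lemma~\ref{lem:Spacing} at its maximal admissible cutoff $B = C_r x^{1/(r+1)}$ (taking $l=r$), which already gives $S_f^\dagger(x;B) \ll x^{(1+\alpha)/(2r)}$ precisely when $\alpha \le \tfrac{1}{2r-1}$, and then disposes of the intermediate range $x^{1/(2r)} < d < x^{1/(r+1)}$ by bounding the sawtooth sums $\sum_d d^\alpha \psi\bigl(x/(d^r+\delta)\bigr)$ directly with exponent pairs (Lemma~\ref{lem:GK}). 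This reduces the full conjecture to a clean criterion: the existence of an exponent pair $(k,\ell)$ with $\ell = rk$. For $r=2$ the pair $\bigl(\tfrac{2}{7},\tfrac{4}{7}\bigr) = BA^2\bigl(\tfrac12,\tfrac12\bigr)$ does the job, which is how the paper settles that case. Your discrepancy route through $\sum_n e\bigl(k(x/n)^{1/r}\bigr)$ is a genuinely different exponential sum problem from the paper's $\sum_d e\bigl(kx/d^r\bigr)$; both are plausible, but the paper's has the practical advantage of reducing to an exponent-pair condition that can be checked for small $r$.
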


It is possible that Conjecture \ref{conj:Trivial} holds for a larger range of $\alpha$. The reason to consider only the range given in Conjecture \ref{conj:Trivial} is that if we take $A$ maximally in \eqref{eq:ExampleEstimate}, then
\[
S_f^\dagger\pth{x;x^{\frac{1}{r+1}}} \ll x^{\frac{r}{(r+1)(2r-1)} + \frac{\alpha}{r+1}} \ll x^{\frac{1+\alpha}{2r}} 
\]
precisely when $\alpha \leq \frac{1}{2r-1}$. Conjecture \ref{conj:Trivial} then follows from the estimate
\[
\sumabs{\sum_{n\leq x^{\frac{1}{r+1}}} \psi\fracp{x}{n^r+\delta}} \ll x^{\frac{r}{(r+1)(2r-1)}},
\]
where $\delta = 0$ or $1$ and $\psi(t) = t- \floor{t} - \frac{1}{2}$. Using estimates for exponential sums, we are able to prove this in the case $r=2$, as well as an estimate for general $r$ assuming the existence of certain exponent pairs.

\begin{thm}\label{thm:ConjProof}
Let $r\geq 2$	and let $f$ be given by {\normalfont(\ref{eq:fOnPowers})} with $h$ satisfying $\abs{h(d)} \ll d^\alpha$ for some $\alpha\in[0,\frac{1}{2r-1}]$. Suppose there exists an exponent pair $(k,\ell)$ such that $\ell=rk$. Then
\[
S_f(x) = C_f x + O\pth{x^{\frac{\alpha}{r+1}+\frac{k}{k+1}}\log x+ + x^{\frac{1+\alpha}{2r}}},
\]
where the factor $\log x$ is present only when $\alpha = 0$.
\end{thm}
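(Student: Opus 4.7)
The plan is to split at $A = x^{1/(r+1)}$. For $n < A$, applying \eqref{eq:ExampleEstimate} at this choice of $A$ yields
\[
S_f^\dagger(x;A) \ll x^{\frac{r}{(r+1)(2r-1)} + \frac{\alpha}{r+1}} \ll x^{\frac{1+\alpha}{2r}}
\]
in the allowed range $\alpha \leq 1/(2r-1)$, which is absorbed in the stated error term. Only the complementary sum $\sum_{A\leq n\leq x} f\pth{\floor{x/n}}$ needs further work.

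For the complementary sum I would parametrize by the integer $d$ with $\floor{x/n} = d^r$, which is legitimate since $f$ is supported on $r$th powers. Each admissible $d$ lies in $1 \leq d \leq D := (x/A)^{1/r} \asymp x^{1/(r+1)}$, and the number of $n \in [A,x]$ attached to $d$ is $\floor{x/d^r} - \floor{x/(d^r+1)}$ up to an $O(1)$ boundary adjustment at $d = D$. Writing $\floor{t} = t - \tfrac{1}{2} - \psi(t)$ and using that $C_f = \sum_d h(d)/(d^r(d^r+1))$ converges absolutely, this gives
\[
\sum_{A\leq n\leq x} f\pth{\floor{x/n}} = C_f\, x + \sum_{d\leq D} h(d)\pth{\psi\pth{\frac{x}{d^r+1}} - \psi\pth{\frac{x}{d^r}}} + O\pth{x^{(1+\alpha)/(2r)}},
\]
since the tail $x\sum_{d>D} h(d)/(d^r(d^r+1)) \ll x D^{\alpha-2r+1}$ and the boundary error $O(D^\alpha)$ are both dominated by $x^{(1+\alpha)/(2r)}$ thanks to $\alpha \leq 1/(2r-1)$.

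The remaining and principal obstacle is the $\psi$-sum. I would decompose dyadically and, on a range $d \asymp N$, apply Vaaler's approximation of $\psi$ with parameter $H$ to reduce matters to exponential sums of the shape $\sum_{d \asymp N} h(d)\, e(\nu x/d^r)$ with $1 \leq \nu \leq H$, together with a Vaaler tail of size $N^{1+\alpha}/H$. To detach the non-smooth weight $h(d)$, I would run Abel summation on the \emph{smooth} exponential factor rather than on $\psi$ itself, which absorbs the weight at the cost of a single factor of $N^\alpha$. The hypothesis that $(k,\ell)$ is an exponent pair with $\ell = rk$ collapses the usual exponent-pair bound $\nu^k x^k N^{\ell - (r+1)k}$ for the pure exponential sum to $(\nu x/N)^k$; inserting this in the Vaaler decomposition and balancing at $H \asymp N x^{-k/(k+1)}$ yields
\[
\sum_{d \asymp N} h(d)\,\psi\pth{\frac{x}{d^r}} \ll N^\alpha\, x^{k/(k+1)},
\]
the range $N < x^{k/(k+1)}$ being handled by the trivial bound $\abs{\psi} \leq 1/2$. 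Summing over the dyadic scales $N \leq D \asymp x^{1/(r+1)}$ gives $x^{\alpha/(r+1) + k/(k+1)}$ when $\alpha > 0$ (the geometric sum in $N^\alpha$ is dominated by the largest scale $N = D$) and the same bound with an extra $\log x$ when $\alpha = 0$ (harmonic sum over the $O(\log x)$ dyadic scales). Combined with the $n < A$ contribution this produces the stated estimate. The main delicacy is the management of the non-smooth weight $h(d)$ inside the exponential sum, which I believe must be handled via the Abel-summation trick on the smooth exponential so that only the modest loss $N^\alpha$ is incurred.
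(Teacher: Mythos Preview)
Your overall architecture---cut at $A=x^{1/(r+1)}$, invoke the spacing estimate for $n<A$, and parametrise the complementary range by $d\le D\asymp x^{1/(r+1)}$---is sound and matches the paper's shape. The gap is in the step you yourself flag as the ``main delicacy'': the bound
\[
\sum_{d\asymp N} h(d)\,\psi\!\left(\frac{x}{d^r}\right)\ \ll\ N^{\alpha}\,x^{k/(k+1)}
\]
cannot be obtained by Abel summation as you describe. If you sum by parts with the exponential $e(\nu x/d^r)$ as the ``smooth'' factor, its total variation on $[N,2N]$ is $\asymp \nu x/N^{r}$, which for $N\le x^{1/(r+1)}$ is $\gg 1$; you then only recover $|\sum_{d\le t}h(d)|\ll N^{1+\alpha}$ times that variation, and the exponent pair never enters. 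Summing the other way requires control on the increments $h(d+1)-h(d)$, which the hypothesis $|h(d)|\ll d^{\alpha}$ does not provide. In fact the displayed bound is false for general $h$ of this size: with signs chosen so that $h(d)\psi(x/d^r)\ge 0$, the left side is $\asymp N^{1+\alpha}$, which for $N=x^{1/(r+1)}$ exceeds $N^{\alpha}x^{k/(k+1)}$ whenever $k/(k+1)<1/(r+1)$.

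The paper repairs this with a second cut at $A=\sqrt{x}$ (so $d\le x^{1/(2r)}$ in the ``sharp'' piece): there the $h$-weighted $\psi$-sum is estimated \emph{trivially} by $\sum_{d\le x^{1/(2r)}} d^{\alpha}\ll x^{(1+\alpha)/(2r)}$, which is exactly the second error term. On the remaining range $x^{1/(2r)}<d\le x^{1/(r+1)}$ one uses that $\lfloor x/d^r\rfloor-\lfloor x/(d^r+1)\rfloor\ge 0$ to replace $|h(d)|$ by the \emph{smooth} majorant $d^{\alpha}$ \emph{before} opening the floor brackets; the resulting $\psi$-sums carry the weight $d^{\alpha}$, to which partial summation and the exponent-pair bound (via the Graham--Kolesnik lemma) apply cleanly, producing the $x^{\alpha/(r+1)+k/(k+1)}\log x$ term. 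Your single-cut argument can be salvaged by inserting precisely this device; without it the $h$-weighted exponential sum step does not go through.
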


This implies the following improvements to Theorem \ref{thm:GeneralWuZhai} in the cases $r=2,3,4$.

\begin{cor}\label{cor:ConjProof}${}$
\begin{enumerate}[label={\normalfont \arabic*.}]
\item Let $r=2$ and let $f$ be given by {\normalfont(\ref{eq:fOnPowers})} with $h$ satisfying $\abs{h(d)} \ll d^\alpha$ for some $\alpha\in[0,\frac{1}{3}]$. Then
\[
S_f(x) = C_f x + O\pth{x^{\frac{\alpha}{3}+\frac{2}{9}}\log x + x^{\frac{1+\alpha}{4}}},
\]

\item Let $r=3$ and let $f$ be given by {\normalfont(\ref{eq:fOnPowers})} with $h$ satisfying $\abs{h(d)} \ll d^\alpha$ for some $\alpha\in[0,\frac{1}{5}]$. Then
\[
S_f(x) = C_f x + O\pth{x^{\frac{\alpha}{4}+\frac{11}{64}}\log x}.
\]
\item Let $r=4$ and let $f$ be given by {\normalfont(\ref{eq:fOnPowers})} with $h$ satisfying $\abs{h(d)} \ll d^\alpha$ for some $\alpha\in[0,\frac{1}{7}]$. Then
\[
S_f(x) = C_f x + O\pth{x^{\frac{\alpha}{5}+\frac{1}{7}}\log x}.
\]
\end{enumerate}
In each case, $C_f$ is given by {\normalfont\eqref{eq:CfDef}} and the factor $\log x$ is present only when $\alpha = 0$.  In particular, Conjecture \ref{conj:Trivial} holds in the case $r=2$.
\end{cor}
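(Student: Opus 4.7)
The plan is to derive each of the three cases $r=2,3,4$ as a direct application of Theorem \ref{thm:ConjProof}, by exhibiting for each $r$ an exponent pair $(k,\ell)$ satisfying $\ell = rk$ and computing $k/(k+1)$. The only real work is to produce these pairs via the van der Corput $A$- and $B$-processes; the rest is routine substitution and comparison of exponents.

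For $r=4$, the classical pair $(k,\ell) = (1/6,\,2/3)$ arises as $A\cdot B(0,1)$ and is a fixed point of $B$; it satisfies $\ell = 4k$ and yields $k/(k+1) = 1/7$. For $r=2$, a short sequence of $A,B$ applications starting from the trivial pair $(0,1)$ produces $(k,\ell) = (2/7,\,4/7)$, which satisfies $\ell = 2k$ and yields $k/(k+1) = 2/9$. For $r=3$, the pair $(k,\ell) = (11/53,\,33/53)$, reached for example by applying $A,A,B,A,B$ in turn to $(2/7,\,4/7)$ (passing through $(1/9,13/18)$, $(1/20,33/40)$, $(13/40,11/20)$, and $(13/106,75/106)$), satisfies $\ell = 3k$ and yields $k/(k+1) = 11/64$. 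Plugging each of these into Theorem \ref{thm:ConjProof} gives error terms matching those stated in the three parts of the corollary.

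It then remains to simplify the error. For $r=3$ and $r=4$, a short linear computation shows that $\alpha/(r+1) + k/(k+1) \geq (1+\alpha)/(2r)$ throughout $\alpha \in [0,\,1/(2r-1)]$, so the $x^{(1+\alpha)/(2r)}$ term is absorbed into the first term (and the $\log x$ factor is absorbed whenever $\alpha > 0$, matching the statement). For $r=2$, the comparison goes the other way: the inequality $\alpha/3 + 2/9 \leq (1+\alpha)/4$ holds for $\alpha \in [0,1/3]$ (with equality at $\alpha = 1/3$), and at $\alpha = 0$ one has the strict inequality $2/9 < 1/4$ so that the isolated $\log x$ factor is swallowed by $x^{1/4}$. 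Hence the error reduces to $O(x^{(1+\alpha)/4})$ throughout the range, which is precisely Conjecture \ref{conj:Trivial} for $r=2$.

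The main obstacle is the production of the exponent pair $(11/53,33/53)$ for $r=3$: the $A$- and $B$-processes do not directly land at a pair with $\ell/k = 3$, so one has to search among iterates until a vertex with the required ratio is hit, which accounts for the somewhat unusual denominators in the $r=3$ case. The corresponding pairs for $r=2$ and $r=4$ are classical, and all of the exponent comparisons are elementary one-variable linear inequalities.
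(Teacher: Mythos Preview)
Your proposal is correct and follows essentially the same route as the paper: apply Theorem \ref{thm:ConjProof} with the exponent pairs $(2/7,4/7)$, $(11/53,33/53)$, and $(1/6,2/3)$ for $r=2,3,4$, which are exactly the pairs the paper uses (your $A,B$ sequences agree with the paper's $BA^2$, $BABA^2BA^2$, and $A$ applied to $(1/2,1/2)$, since $B(0,1)=(1/2,1/2)$). Your additional verification that the $x^{(1+\alpha)/(2r)}$ term is absorbed for $r=3,4$ and dominates for $r=2$ is correct and makes explicit what the paper leaves to the reader.
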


In light of Theorem \ref{thm:ConjProof}, we also make the following general conjecture regarding exponent pairs, which may be of independent interest.

\begin{conj}
For any real $\ep > 0$ and $r\geq 1$, there exists an exponent pair $(k,\ell)$ such that
\[
\abs{\frac{\ell}{k} - r} < \ep.
\]
\end{conj}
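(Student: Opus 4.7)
The strategy is to combine two classical facts about exponent pairs. First, the set of exponent pairs is closed under convex combinations: if $(k_1, \ell_1)$ and $(k_2, \ell_2)$ are exponent pairs and $t \in [0, 1]$, then so is $(t k_1 + (1-t) k_2, \, t \ell_1 + (1-t) \ell_2)$. This follows from the geometric-mean bound $\abs{S} = \abs{S}^t \abs{S}^{1-t}$ applied to the exponential sum $S$ that defines an exponent pair, using one of the two pair-bounds on each factor; see for instance the monograph of Graham and Kolesnik on van der Corput's method. Second, iterating the $A$-process $(k, \ell) \mapsto (k/(2k+2), \, (k+\ell+1)/(2k+2))$ starting from the van der Corput pair $(1/2, 1/2)$ produces exponent pairs $(k_n, \ell_n)$ with $k_n$ decaying geometrically to $0$ (indeed $k_{n+1} < k_n/2$) and $\ell_n$ tending to $1$; in particular $\ell_n / k_n \to \infty$.

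Given $r \geq 1$ and $\ep > 0$, take $(k, \ell) = (1/2, 1/2)$ when $r = 1$. For $r > 1$, choose $n$ large enough that $\ell_n/k_n > r$ and consider the segment of convex combinations
\[
p(t) = \pth{\tfrac{t}{2} + (1-t) k_n, \; \tfrac{t}{2} + (1-t) \ell_n}, \qquad t \in [0, 1].
\]
By convexity each $p(t)$ is an exponent pair, and the ratio of its two coordinates is continuous in $t$, equal to $\ell_n/k_n > r$ at $t = 0$ and equal to $1$ at $t = 1$. The intermediate value theorem then produces a $t_0 \in (0, 1)$ at which the coordinate ratio is exactly $r$, yielding an exponent pair that satisfies the conclusion with equality rather than merely within $\ep$.

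The main point requiring attention is the convexity claim, whose derivation from the geometric-mean bound above is standard but should be checked against whichever precise definition of exponent pair is in force here (including how the implied constants are tracked). If convexity turned out to be unavailable, the fallback would be to work entirely within pairs obtained from $(0, 1)$ via the $A$- and $B$-processes. Pure $A$-iterates from $(1/2, 1/2)$ yield only the discrete sequence of ratios $1, 4, 11, 26, \ldots$, so this fallback would require a more delicate argument interleaving $A$- and $B$-steps in order to show that their ratios form a dense subset of $[1, \infty)$.
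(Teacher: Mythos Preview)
The paper does not prove this statement; it is presented as an open conjecture with no argument offered. Your proposal is correct and in fact establishes the stronger assertion that for every real $r \geq 1$ there exists an exponent pair with $\ell/k$ equal to $r$ exactly. The convexity of the set of exponent pairs under the standard definition (the one used in Graham--Kolesnik, on which the paper itself relies via their Lemma~4.3) is indeed a routine consequence of the geometric-mean trick $\abs{S} = \abs{S}^t\abs{S}^{1-t}$ that you describe, so your hedge about checking the definition can be dropped. Combined with the elementary observation that the $A$-iterates of $(1/2,1/2)$ have ratio $\ell_n/k_n \to \infty$, the intermediate value theorem applied along the segment from $(1/2,1/2)$ to $(k_n,\ell_n)$ settles the conjecture.

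A consequence worth pointing out: since you actually obtain $\ell = rk$ on the nose, the hypothesis of the paper's Theorem~1.4 is satisfied for every integer $r \geq 2$, so that theorem becomes unconditional. The resulting exponent $k/(k+1)$ of course depends on which convex combination one lands on and need not be optimal (for small $r$ the specific $A$--$B$ words the paper exhibits in Corollary~1.5 do better), but the bare existence question posed in the conjecture is resolved. The author appears simply to have overlooked the convexity argument.
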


\noindent\textbf{Notation.} The Vinogradov-Landau symbols $O,o,\ll,\gg$ have their usual meanings, and we always allow the constants implied by these symbols to depend on the parameter $r$ and the function $f$. We use $n\sim N$ to denote the condition $N < n \leq 2N$ and use $n\asymp N$ to denote the more general condition $c_1N \leq n \leq c_2 N$ for some positive constants $c_1 < c_2$. A sum $\sumd$ denotes a dyadic sum over powers $2^k$. Throughout, $\eta$ is given by (\ref{eq:etaDef}), $e(x) = e^{2\pi i x}$, and $\psi(x) = x - \floor{x} - \frac{1}{2}$.

\section{A Spacing Lemma}\label{sec:Spacing}

In this section, we prove Lemma \ref{lem:Spacing}, which is the main input needed for Theorem \ref{thm:GeneralWuZhai}. To do so, we require the following lemma concerning certain polynomial identities. It is a special case of Lemma 2 of Huxley and Nair \cite{HuxleyNair80}.

\begin{lem}\label{lem:HuxleyNair}
Let $r\geq 2$ be a positive integer. For each integer $l$ with $1\leq l\leq r$, there exist polynomials $P_l$ and $Q_l$ such that, as $x\to 0$,
\begin{enumerate}[label=\normalfont{(\arabic*)}]
\item $\abs{P_l(x) (1-x)^r - Q_l(x)} \ll \abs{x}^{2l-1}$;
\item $P_l$ and $Q_l$ have degree $l-1$;
\item the coefficients of $P_l$ and $Q_l$ are nonzero integers.
\end{enumerate}
\end{lem}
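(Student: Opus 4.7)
The claim is that $Q_l/P_l$ is the Padé approximant of type $[l-1, l-1]$ to the polynomial $(1-x)^r$, so my plan is to set up the defining Padé system, extract $P_l$ and $Q_l$ from a solution, and then verify each of the three listed properties in turn.

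Write $P_l(x) = \sum_{j=0}^{l-1} p_j x^j$ and expand
\[
P_l(x)(1-x)^r = \sum_{k\geq 0} c_k x^k, \qquad c_k = \sum_j p_j \binom{r}{k-j}(-1)^{k-j}.
\]
Set $Q_l(x) = \sum_{k=0}^{l-1} c_k x^k$, so that property (1) is equivalent to the linear system
\[
c_l = c_{l+1} = \cdots = c_{2l-2} = 0,
\]
consisting of $l-1$ homogeneous equations in the $l$ unknowns $p_0,\ldots,p_{l-1}$. A nontrivial rational solution exists on dimension grounds, and since the matrix has integer entries (signed binomials $\binom{r}{k-j}(-1)^{k-j}$), clearing denominators produces integer $p_j$ and hence integer $c_k$. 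This settles property (1) and the integrality half of (3).

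Property (2) asks that the leading coefficients $p_{l-1}$ and $c_{l-1}$ of $P_l$ and $Q_l$ be nonzero. This amounts to the non-vanishing of certain Toeplitz-type minors in the binomial coefficients $\binom{r}{k}(-1)^k$. These minors can be evaluated directly from the classical Padé theory of binomial series and do not vanish in the range $1\leq l\leq r$; in particular, after fixing signs and normalization, the solution $(p_0,\ldots,p_{l-1})$ is unique up to a nonzero rational scalar.

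The main obstacle, and the place where the specific form $(1-x)^r$ really enters, is the non-vanishing of \emph{every} coefficient of $P_l$ and $Q_l$. Here I would invoke explicit closed forms for the Padé approximants of $(1-x)^r$ (expressible via terminating ${}_2F_1$ series or Jacobi polynomials): the coefficients $p_j$ and those of $Q_l$ can be written as products of binomial coefficients in $r,l,j$, together with rising factorials whose factors are of the form $r-l+m$ with $1\leq m\leq l-1$. The hypothesis $l\leq r$ forces every such factor to be a nonzero integer, which gives the required non-vanishing. This is the step most closely tracking the argument of Huxley and Nair, and it is where essentially all of the real content of the lemma lies.
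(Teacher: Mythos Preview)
The paper does not actually prove this lemma; it merely records it as a special case of Lemma~2 of Huxley and Nair and moves on. Your proposal via Pad\'e approximants is precisely the route taken in that cited reference: Huxley and Nair exhibit the Pad\'e table for $(1-x)^r$ explicitly (in hypergeometric/Jacobi-polynomial form) and read off properties (1)--(3) from the closed formulas. So your plan is not merely correct but is essentially a reconstruction of the original argument that the paper is quoting.

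As an outline the proposal is sound. The one place where genuine work is still owed is your final paragraph: the abstract Pad\'e existence argument gives you (1) and integrality, but the claim that \emph{every} coefficient of $P_l$ and $Q_l$ is nonzero really does require writing down the explicit product formulas and checking that none of the factors vanish for $1\le l\le r$. You gesture at factors of the shape $r-l+m$ with $1\le m\le l-1$, which is the right flavour, but in a complete write-up you would need to display the actual coefficient formulas and verify this range condition carefully rather than assert it.
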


\begin{lem}\label{lem:Spacing}
Let $r\geq 1$ and let $f$ be given by {\normalfont(\ref{eq:fOnPowers})}. Suppose $h(d) \leq C d^\alpha$ for some $\alpha,C \geq 0$. Then for each integer $l$ with $1\leq l\leq r$, there exists a constant $C_r$ depending only on $r$ such that for $A\leq C_r x^{\frac{r+1-l}{2r+1-l}}$, we have
\begin{equation}\label{eq:SpacingEstimate}
\sum_{n < A} f\pth{\floor{\frac{x}{n}}} \ll x^{\frac{l-1}{r(2l-1)}+\frac{\alpha}{r}} \pth{A^{\frac{r+1-l}{r(2l-1)} - \frac{\alpha}{r}} + 1}.
\end{equation}
\end{lem}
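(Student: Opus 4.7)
The strategy is to partition $\sum_{n<A}$ into dyadic blocks $n\sim N$ and control each block via a spacing argument built from Lemma~\ref{lem:HuxleyNair}. For $n\sim N$ with $\floor{x/n}=d^r$, the integer $d$ satisfies $d\asymp D:=(x/N)^{1/r}$, so $\abs{h(d)}\ll D^\alpha$ and the dyadic contribution is $\ll D^\alpha k_N$, where $k_N$ counts the valid $n\sim N$. It therefore suffices to show $k_N \ll D^{(l-1)/(2l-1)} N^{1/(2l-1)} + 1$; a dyadic summation over $N\le A$ then produces \eqref{eq:SpacingEstimate}, with the two summands arising from the monotone behavior of the resulting geometric series in $N$ (endpoint vs.\ $N=1$).

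For the bound on $k_N$, let $n_1 < n_2$ in $(N,2N]$ be two valid points with corresponding $d_1>d_2$, and write $n_id_i^r = x - \beta_i$ with $\beta_i\in[0,n_i)$. Applying Lemma~\ref{lem:HuxleyNair} with $y=(d_1-d_2)/d_1$ and clearing denominators by $d_1^{l-1}$ yields homogeneous integer polynomials $\tilde P_l,\tilde Q_l$ of degree $l-1$ in $(d_1,d_2)$ satisfying
\[
\tilde P_l(d_1,d_2)\,d_2^r - \tilde Q_l(d_1,d_2)\,d_1^r \ll (d_1-d_2)^{2l-1} D^{r-l}.
\]
Multiplying through by $n_1 n_2$ and substituting $n_i d_i^r = x - \beta_i$ transforms this into
\[
\abs{xM} \ll (d_1-d_2)^{2l-1} D^{r-l} N^2 + D^{l-1} N^2,
\]
where $M := \tilde P_l(d_1,d_2)\,n_1 - \tilde Q_l(d_1,d_2)\,n_2 \in \mathbb{Z}$. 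The restriction $A\le C_r x^{(r+1-l)/(2r+1-l)}$ is calibrated precisely so that the second error is at most $x/2$ for every $N\le A$; hence when $M\ne 0$ one extracts the spacing bound
\[
d_1 - d_2 \gg \fracp{D^l}{N}^{1/(2l-1)} =: \Delta.
\]
Since the valid $d$'s lie in an interval of length $O(D)$, viewing $\Delta$ as a lower bound on consecutive gaps yields $k_N \ll D/\Delta + 1$, as required.

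The main technical obstacle is the exceptional case $M=0$. There one has $n_2/n_1 = \tilde P_l(d_1,d_2)/\tilde Q_l(d_1,d_2)$; by Lemma~\ref{lem:HuxleyNair} this rational approximates $(d_1/d_2)^r$ to within $O((d_1-d_2)^{2l-1}/d_1^{2l-1})$, while $n_2/n_1$ itself approximates $(d_1/d_2)^r$ to within $O(N/x)$ coming from $n_i=\floor{x/d_i^r}$. I would expect these two competing accuracies to confine the $M=0$ configurations to a set thin enough to be absorbed into the additive $+1$ in $k_N$; note that at the endpoint $N \asymp x^{(r+1-l)/(2r+1-l)}$ the two scales are precisely balanced, which is the source of the calibration of the hypothesis on $A$. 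Making this rigorous—perhaps by iterating the identity at a higher value of $l$ on pairs that fail the spacing, or by examining the algebraic constraints imposed by $M=0$ together with $n_i=\floor{x/d_i^r}$—is where the bulk of the technical work lies.
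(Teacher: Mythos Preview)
Your setup is essentially the same as the paper's: dyadic decomposition (the paper indexes by $D$ rather than $N$, but this is cosmetic), Huxley--Nair to produce an integer combination $M=\tilde P_l n_1-\tilde Q_l n_2$, and the observation that the hypothesis $A\le C_r x^{(r+1-l)/(2r+1-l)}$ kills the secondary error term. The derived spacing bound $d_1-d_2\gg (D^l/N)^{1/(2l-1)}$ and the resulting count $k_N\ll D^{(l-1)/(2l-1)}N^{1/(2l-1)}+1$ also match.

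The genuine gap is the $M=0$ case, and your intuition about it is pointed in the wrong direction. You expect the $M=0$ configurations to be thin enough to hide in the $+1$, but in fact the calibration of $A$ forces $M=0$ for \emph{every} pair of valid points at distance at most $L:=(D^l/N)^{1/(2l-1)}$. So there is nothing exceptional about $M=0$; it is the generic situation for close pairs, and the spacing bound you extract from $M\neq 0$ carries no information at the scale you need it.

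The paper's resolution is to exploit $M=0$ rather than avoid it. Take three valid points $d$, $d-a$, $d-a-b$ in an interval of length $\le L$, with associated $n_1,n_2,n_3$. Each of the three pairwise relations $M=0$ holds, giving
\[
\tilde P_l(a,d)\,n_2=\tilde Q_l(a,d)\,n_1,\quad
\tilde P_l(a{+}b,d)\,n_3=\tilde Q_l(a{+}b,d)\,n_1,\quad
\tilde P_l(b,d{-}a)\,n_3=\tilde Q_l(b,d{-}a)\,n_2.
\]
Eliminating $n_3$ from the last two yields
\[
\tilde P_l(a{+}b,d)\,\tilde Q_l(b,d{-}a)\,n_2-\tilde P_l(b,d{-}a)\,\tilde Q_l(a{+}b,d)\,n_1=0.
\]
Viewed as a polynomial in $b$ with $d,a,n_1,n_2$ fixed, the two products share the same leading $b$-coefficient (namely the product of the top coefficients of $P_l$ and $Q_l$), so the left side has degree $2l-2$; it is not identically zero because $n_1\neq n_2$. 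Hence at most $2l-2$ admissible values of $b$, so at most $2l$ valid points in any interval of length $L$, and $k_N\ll D/L+1$ follows. This three-point elimination is the missing idea in your proposal.
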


\begin{proof}
As stated in the introduction, we follow the method described in Section 1 of \cite{FGT2015}. Trivially
\begin{equation}\label{eq:TMDecomp}
\sum_{n < A} f\pth{\floor{\frac{x}{n}}} \ll \sumd_{\fracp{x}{A}^{\frac{1}{r}} \ll D \ll x^{1/r}}  D^{\alpha} T(D),
\end{equation}
where
\[
T(D) = \abs{\set{d\sim D: d^r = \floor{\frac{x}{n}}\ \text{for some $n$}}}.
\]
Suppose $d,d-a\in T(D)$ so that
\begin{equation}\label{eq:mVals}
d^r = \floor{\frac{x}{n_1}}, \qquad (d-a)^r = \floor{\frac{x}{n_2}}.
\end{equation}
Removing the floor brackets and rearranging, we have
\begin{equation}\label{eq:nsNoFloor}
n_1 = \frac{x}{d^r} + O\fracp{x}{D^{2r}}, \qquad n_2 = \frac{x}{(d-a)^r} + O\fracp{x}{D^{2r}}.
\end{equation}
Using Lemma \ref{lem:HuxleyNair}, there exist polynomials $P_l,Q_l$ such that for $a = o(D)$, we have
\[
\abs{P_l\fracp{a}{d}\pth{1-\frac{a}{d}}^r - Q_l\fracp{a}{d}} \ll \fracp{a}{D}^{2l-1}.
\]
Multiplying through by $d^{r+l-1}$, we have
\begin{equation}\label{eq:PolyBound}
\abs{P(a,d)\pth{d-a}^r - Q(a,d)d^r} \ll a^{2l-1} D^{r-l}
\end{equation}
for some homogeneous polynomials $P_0(a,d), Q_0(a,d)$ of degree $l-1$ with integer coefficients. Consider now the modified difference
\[
\cold = P_0(a,d) n_1 - Q_0(a,d) n_2.
\]
By (\ref{eq:nsNoFloor}) and (\ref{eq:PolyBound}),
\[
\abs{\cold} \ll \frac{xa^{2l-1}}{D^{l+r}} + \frac{x}{D^{2r+1-l}}.
\]
The inequality $A \leq C_r x^{\frac{r+1-l}{2r+1-l}}$ implies
\begin{equation}\label{eq:Dreq}
D \gg \fracp{x}{A}^{\frac{1}{r}} \geq C_r' x^{\frac{1}{2r+1-l}}
\end{equation}
for some constant $C_r'$. Taking $C_r$ sufficiently small, we see that there is a constant $C_r''$ such that
\begin{equation}\label{eq:ColdAbs}
\abs{\cold} \leq C_r''\frac{xa^{2l-1}}{D^{l+r}} + \frac{1}{2}.
\end{equation}
Since $\cold$ is an integer, it follows that if $a \leq C_r''' D^{\frac{l+r}{2l-1}} x^{-\frac{1}{2l-1}} = L$, say, for some sufficiently small constant $C_r'''$, then $\abs{\cold} < 1$, so $\cold = 0$.

Suppose now that $I$ is a subinterval of $(D,2D]$ of length at most $L$. Using the above analysis, we will show that $I$ contains at most $2l$ elements of $T(D)$. Suppose that $d,d-a,d-a-b \in T(D)$, so
\[
d^r = \floor{\frac{x}{n_1}}, \qquad (d-a)^r = \floor{\frac{x}{n_2}}, \qquad (d-a-b)^r = \floor{\frac{x}{n_3}}.
\]
Then since $\abs{I}\leq L$, it follows that
\[
\begin{aligned}
P_0(a,d) n_2 - Q_0(a,d) n_1 &= 0 \\
P_0(a+b,d) n_3 - Q_0(a+b,d) n_1 &= 0\\
P_0(b,d+a) n_3 - Q_0(b,d+a) n_2 &= 0
\end{aligned}
\]
We multiply the second of these by $P_0(b,d+a)$ and the third by $P_0(a+b,d)$, then subtract to see that
\begin{equation}\label{eq:polyEquals0}
P_0(a+b,d)Q_0(b,d+a) n_2 - P_0(b,d+a)Q_0(a+b,d) n_1 = 0.
\end{equation}
We view $d$ and $a$ as fixed and view the left hand side as a polynomial in $b$. Note that, by construction, $P_0(a+b,d)Q_0(b,d+a)$ and $P_0(b,d+a)Q_0(a+b,d)$ have the same leading coefficient as polynomials in $b$, namely $v_P v_Q$, where $v_P$ and $v_Q$ are the leading coefficients of $P_l$ and $Q_l$, respectively. Clearly we cannot have $n_1=n_2$, and therefore the left side of (\ref{eq:polyEquals0}) is a polynomial of degree $2l-2$. Thus (\ref{eq:polyEquals0}) has at most $2l-2$ solutions in $b$, and it follows that any interval $I$ of length at most $L$ contains at most $2l$ elements of $T(D)$. Dividing the interval $(D,2D]$ into $O(DL^{-1})$ intervals of length at most $L$, we deduce that
\[
T(D) \ll \frac{D}{L} +1 \ll \fracp{x}{D^{r-l+1}}^{\frac{1}{2l-1}}.
\]
We conclude the proof of Lemma \ref{lem:Spacing} by substituting this into \eqref{eq:TMDecomp} and executing the dyadic sum in $D$.

\end{proof}

\section{Proofs of Theorems \ref{thm:GeneralWuZhai} and \ref{thm:ConjProof}} 

\subsection{Main and Error Terms}
Similar to previous works, we decompose $S_f(x)$ via
\[
S_f(x) = S_f^\dagger(x;B)  + S_f^\flat(x;A;B) + S_f^\sharp(x;A),
\]
where
\[
\begin{aligned}
S_f^\dagger(x;B) &=  \sum_{n \leq B} f\pth{\floor{\frac{x}{n}}}, \\
S_f^\flat(x;A;B) &=  \sum_{\fracp{x}{A}^{\frac{1}{r}} < d < \fracp{x}{B}^{\frac{1}{r}}} h(d) \pth{\floor{\frac{x}{d^r}} - \floor{\frac{x}{d^r+1}}}, \\
S_f^\sharp(x;A) &=  \sum_{d\leq \fracp{x}{A}^{\frac{1}{r}}} h(d) \pth{\floor{\frac{x}{d^r}} - \floor{\frac{x}{d^r+1}}},
\end{aligned}
\]
and $1 \leq B \leq A \leq \sqrt{x}$ are parameters to be chosen. We treat these sums as follows. 

For $S_f^\dagger(x;B)$, we apply Lemma \ref{lem:Spacing}. In the range
\begin{equation}\label{eq:ParamRange}
1\leq l\leq r,\qquad \alpha < \frac{r+1-l}{2l-1},\qquad B\leq C_r x^{\frac{r+1-l}{2r+1-l}},
\end{equation}
we have
\begin{equation}\label{eq:SpacingBound}
S_f^\dagger(x;B) \ll \fracp{x}{B}^{\frac{\alpha}{r}} x^{\frac{l-1}{r(2l-1)}} B^{\frac{r+1-l}{r(2l-1)}}.
\end{equation}

For $S_f^\flat(x;A;B)$, we have
\[
\begin{aligned}
\abs{S_f^\flat(x;A;B)} &\leq \sum_{\fracp{x}{A}^{\frac{1}{r}} < d < \fracp{x}{B}^{\frac{1}{r}}} \abs{h(d)} \pth{\floor{\frac{x}{d^r}} - \floor{\frac{x}{d^r+1}}} \\
&\ll \sum_{\fracp{x}{A}^{\frac{1}{r}} < d < \fracp{x}{B}^{\frac{1}{r}}} d^\alpha \pth{\floor{\frac{x}{d^r}} - \floor{\frac{x}{d^r+1}}} \\
&\leq x \sum_{\fracp{x}{A}^{\frac{1}{r}} < d < \fracp{x}{B}^{\frac{1}{r}}} \frac{d^\alpha}{d^r(d^r+1)} + \abs{\cole_0^\flat(x;A,B)} + \abs{\cole_1^\flat(x;A,B)},
\end{aligned}
\]
where
\begin{equation}\label{eq:ColeFlatDef}
\cole_\delta^\flat(x;A,B)  = \sum_{\fracp{x}{A}^{\frac{1}{r}} < d < \fracp{x}{B}^{\frac{1}{r}}} d^\alpha \psi\fracp{x}{d^r+\delta}.
\end{equation}
Since $r \geq 2$, we have $\alpha < 2r-1$ by \eqref{eq:ParamRange}. Thus
\begin{equation}\label{eq:FracTailBound}
S_f^\flat(x;A;B) \ll \frac{A^2}{x}\fracp{x}{A}^{\frac{1+\alpha}{r}} + \abs{\cole_0^\flat(x;A,B)} + \abs{\cole_1^\flat(x;A,B)}.
\end{equation}

Finally, for $S_f^\sharp(x;A)$, we remove the floor brackets to get
\[
S_f^\sharp(x;A) = x \sum_{d\leq \fracp{x}{A}^{\frac{1}{r}}} \frac{h(d)}{d^r(d^r+1)} + \sum_{d\leq \fracp{x}{A}^{\frac{1}{r}}} h(d) \pth{\psi\pth{\frac{x}{d^r+1}} - \psi\pth{\frac{x}{d^r}}}.
\]
The first sum is
\[
x \sum_{d\leq \fracp{x}{A}^{\frac{1}{r}}} \frac{h(d)}{d^r(d^r+1)} = C_f x + O\pth{\frac{A^2}{x}\fracp{x}{A}^{\frac{1+\alpha}{r}}},
\]
and thus
\begin{equation}\label{eq:Main+Error}
S_f^\sharp(x;A) = C_f x + O\sumpth{ \frac{A^2}{x}\fracp{x}{A}^{\frac{1+\alpha}{r}} + \abs{\cole_0^\sharp(x;A)} + \abs{\cole_1^\sharp(x;A)}},
\end{equation}
where
\begin{equation}\label{eq:ColeSharpDef}
\cole_{\delta}^\sharp(x;A) =\sum_{d\leq \fracp{x}{A}^{\frac{1}{r}}} h(d)\psi\pth{\frac{x}{d^r+\delta}}.
\end{equation}
Combining \eqref{eq:SpacingBound}, \eqref{eq:FracTailBound}, and \eqref{eq:Main+Error}, we have, in the range \eqref{eq:ParamRange},
\begin{equation}\label{eq:FourErrorTerms}
S_f(x) = C_f x + O\pth{\frac{A^2}{x}\fracp{x}{A}^{\frac{1+\alpha}{r}} + \fracp{x}{B}^{\frac{\alpha}{r}} x^{\frac{l-1}{r(2l-1)}} B^{\frac{r+1-l}{r(2l-1)}} + \cole(x;A,B)},
\end{equation}
where 
\begin{equation}\label{eq:ColeDef}
\cole(x;A,B) = \abs{\cole_0^\flat(x;A,B)} + \abs{\cole_1^\flat(x;A,B)}+ \abs{\cole_0^\sharp(x;A)} + \abs{\cole_1^\sharp(x;A)}
\end{equation} 
and $\cole_\delta^\flat$ and $\cole_\delta^\sharp$ are given by \eqref{eq:ColeFlatDef} and \eqref{eq:ColeSharpDef}, respectively. We note the trivial bound
\begin{equation}\label{eq:coleTrivial}
\cole(x;A,B) \ll \fracp{x}{B}^{\frac{1+\alpha}{r}}.
\end{equation}

\subsection{Proof of Theorem \ref{thm:GeneralWuZhai}} \label{sec:WuZhai}

Using the trivial estimate \eqref{eq:coleTrivial} with $B = A$ and recalling that we will choose $A \leq \sqrt{x}$, we have, in the range \eqref{eq:ParamRange}
\[
S_f(x) = C_f x + O\pth{\fracp{x}{A}^{\frac{\alpha}{r}} \pth{\fracp{x}{A}^{\frac{1}{r}} + x^{\frac{l-1}{r(2l-1)}}A^{\frac{r+1-l}{r(2l-1)}}} }.
\]
The optimal choice $A = x^{\frac{l}{r+l}}$ gives the estimate
\[
S_f(x) = C_f x + O\pth{x^{\frac{1+\alpha}{l+r}}}.
\]
However, we must choose $A$ subject to (\ref{eq:ParamRange}), and this requires
\[
l \leq \frac{r+1}{2}.
\]
We conclude the proof of Theorem \ref{thm:GeneralWuZhai} by choosing $l = \frac{r+\eta}{2}$.

\subsection{Proof of Theorem \ref{thm:ConjProof} }

Recall that
\[
\cole_\delta^\flat(x;A,B)  = \sum_{\fracp{x}{A}^{\frac{1}{r}} < d < \fracp{x}{B}^{\frac{1}{r}}} d^\alpha \psi\fracp{x}{d^r+\delta}.
\]
Subdividing into dyadic intervals and applying partial summation, we have
\[
\cole_\delta^\flat(x;A,B) \ll \sumd_{\fracp{x}{A}^{\frac{1}{r}} \ll D \ll \fracp{x}{B}^{\frac{1}{r}}} D^\alpha \max_{D \leq D' \leq 2D} \abs{\sum_{D < d \leq D'} \psi\fracp{x}{d^r+\delta}}.
\]
We estimate the sum over $d$ via the following lemma, which is a special case of Lemma 4.3 of \cite{GrahamKolesnik}.

\begin{lem}[Graham and Kolesnik]\label{lem:GK}
Let $g(n) \asymp y n^{-r}$ for some $r > 0$ and $g\in C^1([N,2N])$ with $N \geq 1$. Then for any exponent pair $(k,\ell)$ and any $N'\in[N,2N]$, we have
\[
\sum_{N < n \leq N'} \psi(g(n)) \ll y^{\frac{k}{k+1}} N^{\frac{\ell-rk}{k+1}} + \frac{N^{r+1}}{y}.
\]
\end{lem}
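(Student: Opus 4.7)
The plan is to deduce the bound from a truncated Fourier approximation to $\psi$ combined with the defining property of the exponent pair $(k,\ell)$; this is the standard scheme used throughout Chapter 4 of Graham--Kolesnik.

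First, I fix a truncation height $H\geq 1$ (to be optimized at the end) and invoke Vaaler's approximation theorem to write
\[
\psi(t) = \sum_{1\leq |h|\leq H} c_h\, e(ht) + R(t,H),
\]
with $|c_h|\ll 1/|h|$, where $R(t,H)$ is majorized by a non-negative trigonometric polynomial of degree $H$ whose coefficients are $O(1/H)$. Substituting and applying the triangle inequality reduces the problem to bounding the exponential sums
\[
T(h) = \sum_{N<n\leq N'} e(hg(n)), \qquad 1\leq h\leq H,
\]
together with a $\ll N/H$ contribution arising from the $h=0$ term in the majorant of $R$.

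Second, for each $h\geq 1$ the function $n\mapsto hg(n)$ on $[N,2N]$ has derivative of size $\asymp hy/N^{r+1}$ and, being essentially a scaled monomial, satisfies the higher-derivative conditions required to invoke the exponent pair $(k,\ell)$; in the customary normalization this yields $T(h)\ll (hy/N^r)^k N^{\ell-k}$. Summing over $h\leq H$ with the $1/h$ Fourier weight (using $\sum_{h\leq H} h^{k-1}\ll H^k$ for $k>0$) and combining with the $N/H$ contribution produces
\[
\sum_{N<n\leq N'}\psi(g(n)) \ll H^k y^k N^{\ell-k(r+1)} + \frac{N}{H}.
\]
Balancing the two terms via $H^{k+1} = N^{1-\ell+k(r+1)} y^{-k}$ then yields the first term $y^{k/(k+1)} N^{(\ell-rk)/(k+1)}$ of the conclusion.

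The supplementary term $N^{r+1}/y$ enters to cover the boundary regime in which this optimization forces $H<1$, equivalently when the derivative of $g$ is so large that Fourier truncation becomes wasteful; here one falls back on a direct first-derivative (Kusmin--Landau) estimate for $T(h)$, or equivalently on the change-of-variables count showing that each unit interval in the range of $g$ contains $\asymp N^{r+1}/y$ preimages. The main technical obstacle in making the sketch rigorous is checking that the lone hypothesis $g(n)\asymp yn^{-r}$ with $g\in C^1$ supplies enough control on higher derivatives to apply the exponent pair uniformly in $h$, since exponent pairs formally presuppose arbitrarily many derivative conditions; Graham and Kolesnik handle this by working inside a class of smooth functions closed under the $A$- and $B$-processes that define such pairs, of which monomial-like $g$'s form a natural subclass.
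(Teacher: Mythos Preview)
Your sketch is correct and is exactly the standard argument behind Lemma~4.3 of Graham--Kolesnik: truncate $\psi$ \`a la Vaaler (or Erd\H{o}s--Tur\'an), bound each exponential sum $T(h)$ by the exponent-pair estimate $(hy N^{-r-1})^k N^{\ell}$, sum the $1/h$ weights, and balance against the $N/H$ remainder. Your identification of the secondary term $N^{r+1}/y$ with the first-derivative/Kusmin--Landau fallback (equivalently the $F^{-1}$ correction in the exponent-pair bound) is also right.

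The paper, however, does not prove this lemma at all: it is simply quoted as a special case of Lemma~4.3 of \cite{GrahamKolesnik}. So there is no ``paper's own proof'' to compare against; you have supplied the argument that the cited reference contains. Your closing observation is well taken: the hypotheses as stated in the paper ($g\asymp yn^{-r}$, $g\in C^{1}$) are strictly weaker than what exponent pairs require, and the lemma should really be read with the implicit understanding that $g$ lies in the monomial-type function class Graham and Kolesnik work with throughout. In the paper's applications $g(n)=x/(n^{r}+\delta)$, which does belong to that class, so the informality is harmless in context.
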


Applying this to $\cole_\delta^\flat$ yields
\begin{equation}\label{eq:GKResult}
\cole_\delta^\flat(x;A,B) \ll \sumd_{\fracp{x}{A}^{\frac{1}{r}} \ll D \ll \fracp{x}{B}^{\frac{1}{r}}} D^\alpha \pth{x^{\frac{k}{k+1}} D^{\frac{\ell-rk}{k+1}} + \frac{D^{r+1}}{x}}.
\end{equation}
By hypothesis, there exists an exponent pair $(k,\ell)$ with $\ell=rk$, and so
\[
\cole_\delta^\flat(x;A,B) \ll \sumd_{\fracp{x}{A}^{\frac{1}{r}} \ll D \ll \fracp{x}{B}^{\frac{1}{r}}} D^\alpha \pth{x^{\frac{k}{k+1}} + \frac{D^{r+1}}{x}} \ll \fracp{x}{B}^\frac{\alpha}{r} \pth{x^{\frac{k}{k+1}}\log x + \frac{x^\frac{1}{r}}{B^{1+\frac{1}{r}}}},
\]
where the factor $\log x$ is present only when $\alpha = 0$. Estimating $\cole_\delta^\sharp(x;A)$ trivially, we find that
\[
\cole(x;A,b) \ll \fracp{x}{B}^\frac{\alpha}{r} \pth{x^{\frac{k}{k+1}}\log x + \frac{x^\frac{1}{r}}{B^{1+\frac{1}{r}}}} + \fracp{x}{A}^{\frac{1+\alpha}{r}}.
\]
We conclude the proof Theorem \ref{thm:ConjProof} by inserting the above estimate into \eqref{eq:FourErrorTerms} and specifying $A = \sqrt{x}$, $B = C_rx^{\frac{1}{r+1}}$, and $l=r$. Corollary \ref{cor:ConjProof} then follows from the exponent pairs
\[
\pth{\frac{2}{7},\frac{4}{7}} = BA^2\pth{\frac{1}{2},\frac{1}{2}}, \qquad \pth{\frac{11}{53},\frac{33}{53}} = BABA^2BA^2\pth{\frac{1}{2},\frac{1}{2}}, \qquad \pth{\frac{1}{6},\frac{2}{3}} = A\pth{\frac{1}{2},\frac{1}{2}}
\] 
for $r=2,3,4$, respectively. Here $A$ and $B$ denote the usual $A$ and $B$ processes of obtaining new exponent pairs.

\bibliography{references}

\end{document}